\newtheorem{theorem}{Theorem}[section]
\newtheorem{lemma}[theorem]{Lemma}
\newtheorem{pro}{Problem}
\theoremstyle{definition}
\newtheorem{example}[theorem]{Example}
\theoremstyle{remark}
\numberwithin{equation}{section}
\begin{document}

\baselineskip=17pt

\title[Isometries between groups]
{Isometries between groups of invertible elements in Banach algebras}

\author{Osamu~Hatori}
\address{Department of Mathematics, Faculty of Science, 
Niigata University, Niigata 950-2181 Japan}
\curraddr{}
\email{hatori@math.sc.niigata-u.ac.jp}

\thanks{The author was partly 
supported by the Grants-in-Aid for Scientific 
Research, The 
Ministry of Education, Science, Sports and Culture, Japan.}

\keywords{Banach algebras, isometries, groups of the invertible elements}

\subjclass[2000]{47B48,46B04}

\maketitle

\begin{abstract}
We show that if $T$ is an isometry (as metric spaces) from an 
open subgroup of the group of the invertible elements 
in a unital semisimple commutative Banach algebra 
onto an open subgroup of the group of the invertible elements 
in a unital Banach algebra, then 
$T(1)^{-1}T$ is an 
isometrical group isomorphism. In particular, 
$T(1)^{-1}T$ is extended to an isometrical real algebra 
isomorphism from $A$ onto $B$.
\end{abstract}
\section{Introduction}
A long tradition of inquiry seeks sufficient sets of conditions on 
(not only linear) isometries  between Banach algebras 
in order that they are algebraically isomorphic. 
The history of the problem probably dates back to 
a theorem of Banach \cite[Theorem XI. 3]{b}, which is 
the original form of 
the Banach-Stone theorem. 
The Banach-Stone theorem now 
states that two Banach spaces $C(X)$ and $C(Y)$ of 
the complex-valued continuous functions on compact Hausdorff spaces 
$X$ and $Y$ respectively are isomorphic as Banach space if and only if 
$X$ and $Y$ are homeomorphic to each other, therefore if and only if 
the two  are isomorphic as Banach algebras. One 
can say that the multiplication in the Banach algebra $C(X)$ is restored 
from the structure of a Banach space in the category of $C(K)$-spaces. 
Jarosz \cite{ja1} generalized the theorem in the sense 
that the multiplication in 
a uniform algebra is restored from the structure of a Banach space in 
the category of unital Banach algebras (cf. \cite{naga,ja2,ja3}).

In this paper we consider the problem of the same vine. 
Suppose that $B$ is a unital Banach algebra. 
The structure as a metrizable group of the 
group $B^{-1}$ of all the invertible elements in $B$ is said to be 
restored from the metric structure in the category of unital
Banach algebras if $B^{-1}$ is isometrically 
isomorphic as a metrizable 
group to $B_1^{-1}$ whenever $B_1$ is the unital Banach 
algebra and $B^{-1}$ is isometric to $B_1^{-1}$ as a metric space. 
In this paper we show that the structure as a metrizable group 
of the group of the invertible elements in 
the unital semisimple commutative Banach algebra is 
restored from the metric structure in the category of unital 
Banach algebras (Theorem \ref{main}). 
In this case the 
multiplication in the unital semisimple commutative Banach algebra is 
also restored.
It is compared with the following;
the multiplication in a certain 
unital semisimple commutative Banach algebra 
is {\it not} restored from the 
structure as a Banach space in the category of unital 
Banach algebras (see Example \ref{ww+}).

Throughout the paper we denote the unit element in a Banach algebra by $1$ and 
for a complex number $\lambda$, $\lambda 1$ is abbreviated by $\lambda$. 
The maximal ideal space of a unital semisimple commutative 
Banach algebra $A$ is denoted by $\Phi_A$.
We may suppose that $f\in A$ is a continuous function on $\Phi_A$ 
by identifying $f$ itself with its Gelfand transform;
the Gelfand transform of $f$ is also denoted by $f$, for simplicity.
The spectral radius of $f\in A$ is equal to 
the supremum norm of $f$ on $\Phi_A$ and is denoted by $\|f\|_{\infty}$.
\section{Lemmata}
Let ${\mathcal B}_1$ and ${\mathcal B}_2$ be real normed spaces.  
The theorem of Mazur and Ulam \cite{mu,v} states that 
if ${\mathcal B}_1$ is isometric to 
${\mathcal B}_2$ as a metric space, then they are isometrically 
isomorphic to each other as real normed spaces. 
Applying an idea of V\"ais\"al\"a \cite{v}, the following local version also 
holds.
\begin{lemma}\label{lmu}
Let ${\mathcal B}_1$ and ${\mathcal B}_2$ be real normed spaces, 
$U_1$ and $U_2$ non-empty open subsets of ${\mathcal B}_1$ and 
${\mathcal B}_2$ respectively. Suppose that ${\mathcal T}$ is 
a surjective isometry from $U_1$ onto $U_2$. 
If $f,g\in U_1$ satisfy that $(1-r)f+rg\in U_1$ for every 
$r$ with $0\le r \le 1$, then the equality
\[
{\mathcal T}(\frac{f+g}{2})=\frac{{\mathcal T}(f)+{\mathcal T}(g)}{2}
\]
holds.
\end{lemma}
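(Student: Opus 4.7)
I would adapt V\"ais\"al\"a's group-theoretic proof of the Mazur--Ulam theorem to the open-set setting. Set $m = (f+g)/2 \in U_1$ and $m' = (\mathcal{T}(f) + \mathcal{T}(g))/2 \in \mathcal{B}_2$, and put $\lambda = \|\mathcal{T}(m) - m'\|$; the aim is to deduce $\lambda = 0$. A first observation is that $\mathcal{T}(m)$ is equidistant from $\mathcal{T}(f)$ and $\mathcal{T}(g)$ at distance $\|f-g\|/2$ simply by the isometry property, and the same holds for $m'$; the triangle inequality then already gives the a priori bound $\lambda \le \|f-g\|$.

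The key tools are the point reflections $\sigma(x) = 2m - x$ on $\mathcal{B}_1$ and $\rho(y) = 2m' - y$ on $\mathcal{B}_2$, both isometries of the ambient normed spaces. I would consider the partial map $\Psi = \mathcal{T}^{-1} \circ \rho \circ \mathcal{T} \circ \sigma$; where defined it is an isometry into $U_1$ fixing both $f$ and $g$ (since $\sigma$ swaps $f$ with $g$ and $\rho$ swaps $\mathcal{T}(f)$ with $\mathcal{T}(g)$), and a direct calculation yields $\|\Psi(m) - m\| = 2\lambda$. Now I would apply V\"ais\"al\"a's doubling trick: for any partial isometry $h$ into $U_1$ fixing $f$ and $g$, one checks that $h^{*} := h^{-1} \circ \sigma \circ h \circ \sigma$ has the same properties and satisfies $\|h^{*}(m) - m\| = 2\|h(m) - m\|$ (using only that $h^{-1}$ is an isometry and that $\sigma$ swaps $f$ with $g$). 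Iterating with $\Psi_{0} = \Psi$ and $\Psi_{n+1} = \Psi_{n}^{*}$ produces, provided the iterates are defined at $m$, partial isometries fixing $f, g$ with $\|\Psi_{n}(m) - m\| = 2^{n+1}\lambda$. Since each $\Psi_{n}$ is an isometry fixing $f$, we have $\|\Psi_{n}(m) - f\| = \|f-g\|/2$ and hence $\|\Psi_{n}(m) - m\| \le \|f - g\|$; this ceiling is incompatible with exponential growth unless $\lambda = 0$.

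The principal difficulty is the local nature of $\mathcal{T}$: the reflections $\sigma, \rho$ are globally defined, but $\mathcal{T}$ and $\mathcal{T}^{-1}$ only act on $U_1$ and $U_2$, so the iterates $\Psi_{n}$ might fail to be defined at $m$ after too few steps for the contradiction to fire. To handle this I would use that the segment $[f, g]$ is compact in the open set $U_1$ and $\mathcal{T}([f, g])$ is compact in the open set $U_2$, so uniform open tubular neighborhoods of these segments sit inside $U_1$ and $U_2$ respectively; then an induction shows that the finitely many intermediate points needed to evaluate $\Psi_{n}(m)$ lie within a multiple of $2^{n}\lambda$ of the appropriate segment, hence remain in the tubular neighborhoods as long as $2^{n}\lambda$ has not outgrown their widths. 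Combined with the equidistance cap $2^{n+1}\lambda \le \|f-g\|$, which forces the iteration to terminate before the neighborhoods are exceeded, this yields $\lambda = 0$ and completes the proof.
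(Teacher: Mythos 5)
Your reflection--doubling machinery is the right engine and coincides with what the paper uses for \emph{nearby} points: your $\sigma,\rho$ are the paper's $\psi_1,\psi_2$, and the conjugation trick $h\mapsto h^{-1}\circ\sigma\circ h\circ\sigma$ is a standard variant of the paper's direct iteration of $Q=\psi_1\circ{\mathcal T}^{-1}\circ\psi_2\circ{\mathcal T}$. The gap is in your last paragraph, where you try to run the iteration globally inside a tubular neighborhood of the segment. Two things go wrong. First, the intermediate points do not stay near the segment: already at the first step the point $\rho({\mathcal T}(m))$ lies in the midset $\{a:\|a-{\mathcal T}(f)\|=\|a-{\mathcal T}(g)\|=\|f-g\|/2\}$ and is at distance $2\lambda$ from ${\mathcal T}(m)$, where a priori $2\lambda$ can be as large as $\|f-g\|$; in a general normed space this midset is large and there is no reason for it to lie in $U_2$ or in any thin tube around ${\mathcal T}([f,g])$. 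Second, and decisively, the two bounds you want to play off each other do not clash: the contradiction requires reaching an $n$ with $2^{n+1}\lambda>\|f-g\|$, while your domain control only keeps the iterates defined while (a constant times) $2^{n}\lambda$ stays below the tubular width $\varepsilon$. When $\varepsilon\ll\|f-g\|$ --- exactly the situation the lemma must cover, since only the segment, not the midset, is assumed to lie in $U_1$ --- the iteration must stop long before $2^{n+1}\lambda$ can exceed $\|f-g\|$, and you conclude nothing. Your argument therefore proves the statement only when $\|f-g\|$ is small relative to the distance from $[f,g]$ and ${\mathcal T}([f,g])$ to the complements of $U_1$ and $U_2$.

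The missing idea is the globalization step. The paper first proves the midpoint identity precisely in that local regime, where the whole lens $\{u:\|u-h\|<\varepsilon,\ \|u-h'\|<\varepsilon\}$ with $\|h-h'\|/2<\varepsilon$ sits inside $U_1$ (and its counterpart inside $U_2$), so the bounded midset $L_1$ is invariant under every iterate and all compositions are defined; the exponential growth then contradicts the boundedness of $L_1$. It then subdivides $[f,g]$ into $2^n$ equal pieces $h_k$ with spacing less than $\varepsilon$, applies the local case to consecutive triples to get ${\mathcal T}(h_k)+{\mathcal T}(h_{k+2})-2{\mathcal T}(h_{k+1})=0$, and combines these second-difference identities to recover ${\mathcal T}(\frac{f+g}{2})=\frac{{\mathcal T}(f)+{\mathcal T}(g)}{2}$. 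Without some such chaining along the segment your proof is incomplete.
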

\begin{proof}
Let $h,h'\in U_1$. Suppose that $\varepsilon >0$ satisfies that 
$\frac{\|h-h'\|}{2}<\varepsilon$, and 
\[
\{u\in B_1:\|u-h\|<\varepsilon,\,\,\|u-h'\|<\varepsilon \}\subset U_1,
\]
\[
\{a\in B_2:\|a-{\mathcal T}(h)\|<\varepsilon,\,\,
\|a-{\mathcal T}(h')\|<\varepsilon \}\subset U_2.
\]
We will show that 
${\mathcal T}(\frac{h+h'}{2})=\frac{{\mathcal T}(h)+{\mathcal T}(h')}{2}$.
Set 
$r=\frac{\|h-h'\|}{2}$ and let
\[
L_1=
\{u\in {\mathcal B}_1:\|u-h\|=r=\|u-h'\|\},
\]
\[
L_2=
\{a\in {\mathcal B}_2:\|a-{\mathcal T}(h)\|=r=\|a-{\mathcal T}(h')\|\}.
\]
Set also $c_1=\frac{h+h'}{2}$ and $c_2=\frac{{\mathcal T}(h)+
{\mathcal T}(h')}{2}$. Then we have ${\mathcal T}(L_1)=L_2$, 
$c_1\in L_1 \subset U_1$, and $c_2\in L_2 \subset U_2$. 
Let 
\[
\psi_1(x)=h+h'-x \quad (x\in {\mathcal B}_1)
\]
and 
\[
\psi_2(y)={\mathcal T}(h)+{\mathcal T}(h')-y \quad (y\in {\mathcal B}_2).
\]
Then we see that $\psi_1(c_1)=c_1$, $\psi_1(L_1)=L_1$, 
and $\psi_2(L_2)=L_2$. 
Let $Q=\psi_1\circ{\mathcal T}^{-1}\circ\psi_2\circ{\mathcal T}$. 
A simple calculation shows that  
\[
2\|w-c_1\|=\|\psi_1(w)-w\|,\quad (w\in L_1)
\]
and 
\[
\|\psi_1(z)-w\|=\|\psi_1\circ Q^{-1}(z)-Q(w)\|,\quad (z,w \in L_1)
\]
hold. 
Applying these equations we see that
\begin{multline*}
\|Q^{2^{k+1}}(c_1)-c_1\|=\|\psi_1\circ Q^{2^{k+1}}(c_1)-c_1\|
\\
=\|\psi_1\circ Q^{2^k}(c_1)-Q^{2^k}(c_1)\|
=2\|Q^{2^k}(c_1)-c_1\|
\end{multline*}
hold for every nonzero integer $k$, where $Q^{2^n}$ denotes the 
$2^n$-time composition of $Q$. 
By induction we see for every non-negative integer $n$ that 
\[
\|Q^{2^n}(c_1)-c_1\|=2^{n+1}\|c_2-{\mathcal T}(c_1)\|
\]
holds. 
Since $Q(L_1)=L_1$ and $L_1$ is bounded we see that $c_2=
{\mathcal T}(c_1)$, i.e., ${\mathcal T}(\frac{h+h'}{2})=
\frac{{\mathcal T}(h)+{\mathcal T}(h')}{2}$. 

We assume that $f$ and $g$ are as described. Let
\[
K=\{(1-r)f+rg:0\le r\le 1\}.
\]
Since $K$ and ${\mathcal T}(K)$ are compact, there is $\varepsilon >0$ with
\[
d(K,{\mathcal B}_1\setminus U_1)>\varepsilon, \quad 
d({\mathcal T}(K), {\mathcal B}_2\setminus U_2)>\varepsilon,
\]
where 
$d(\cdot , \cdot )$ denotes the distance of two sets.
Then for every $h\in K$ we have 
\[
\{u\in {\mathcal B}_1:\|u-h\|<\varepsilon\}\subset U_1
\]
and
\[
\{b\in {\mathcal B}_2:\|b-{\mathcal T}(h)\|<\varepsilon \}\subset U_2.
\]
Choose a natural number $n$ with $\frac{\|f-g\|}{2^n}<\varepsilon$. 
Let 
\[
h_k=\frac{k}{2^n}(g-f)+f
\]
for each $0\le k\le 2^n$. By the first part of the proof we have
\[
{\mathcal T}(h_k)+{\mathcal T}(h_{k+2})-2{\mathcal T}(h_{k+1})=0
\qquad \text{($k$)}
\]
holds for $0\le k\le 2^n-2$. For $0\le k\le 2^n-4$,
adding the equations ($k$), 2 times of ($k+1$), and ($k+2$) we have
\[
{\mathcal T}(h_k)+{\mathcal T}(h_{k+4})-2{\mathcal T}(h_{k+2})=0,
\]
whence the equality 
\[
{\mathcal T}(\frac{f+g}{2})=\frac{{\mathcal T}(f)+{\mathcal T}(g)}{2}
\]
holds by induction on $n$.
\end{proof}

Let $B$ be a unital Banach algebra. The exponential spectrum for $a\in B$ is 
\[
\sigma_{\exp B}(a)=\{\lambda \in {\mathbb C}:a-\lambda \not\in \exp B\},
\]
where $\exp B$ denotes the principal component of $B^{-1}$; $\exp B$ 
is the set of $\exp a$ for all $a\in B$ for the case where $B$ is 
commutative, and
$\exp B$ is the set of all the finite products of the form of 
$\exp a$ for $a\in B$ in 
general.
A complex-valued function $\varphi$ on $B$ is said to be a selection from the 
exponential spectrum if $\varphi (a)\in \sigma_{\exp B}(a)$ whenever 
$a\in B$. 

To prove Theorem \ref{main}, we apply a lemma concerning complex-linearity of 
real linear selections from the exponential spectrum, which is a version of a 
result due to Kowalski and S\l odkowski \cite[Lemma 2.1]{ks}. 
\begin{lemma}\label{ks}
Let $B$ be a unital Banach algebra. Suppose that 
$\varphi :B\to {\mathbb C}$ is a real linear selection from the exponential 
spectrum. Then $\varphi$ is a complex homomorphism.
\end{lemma}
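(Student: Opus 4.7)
The plan is to recognize $\varphi$ as a Kowalski--S\l odkowski-type functional and transfer their theorem to the setting of the exponential spectrum $\sigma_{\exp B}$. First I would fix the values of $\varphi$ on scalar multiples of $1$. For every complex $\lambda$, the element $\lambda \cdot 1$ belongs to $\exp B$ exactly when $\lambda \ne 0$, since any nonzero $\lambda$ can be written as $e^{\mu}$ and then $\lambda \cdot 1 = \exp(\mu \cdot 1)$. Hence $\sigma_{\exp B}(\lambda \cdot 1) = \{\lambda\}$, which combined with the selection hypothesis forces $\varphi(1)=1$ and $\varphi(i)=i$; by real-linearity $\varphi(\alpha + i\beta) = \alpha + i\beta$ for all $\alpha,\beta \in \mathbb{R}$, and in particular $\varphi(0)=0$.

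Next, real-linearity converts the selection condition into the Kowalski--S\l odkowski hypothesis: for all $a,b \in B$,
\[
\varphi(a) - \varphi(b) = \varphi(a-b) \in \sigma_{\exp B}(a-b).
\]
With $\varphi(0)=0$, this is exactly the hypothesis of \cite[Lemma 2.1]{ks}, only with the exponential spectrum in place of the ordinary one.

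The technical core is then to rerun the Kowalski--S\l odkowski proof in this exponential-spectrum form. Their argument uses (i) a Lipschitz bound for $\varphi$ coming from $|\varphi(a)-\varphi(b)| \le \sup\{|\mu| : \mu \in \sigma(a-b)\}$; (ii) Vesentini-type subharmonicity of $\lambda \mapsto \log\max\{|\mu| : \mu \in \sigma(a + \lambda b)\}$; and (iii) a Hurwitz/Montel-type application that promotes real-linearity to complex-linearity, followed by a Gleason--Kahane--\.Zelazko-type step giving multiplicativity. Each input survives the passage from $\sigma$ to $\sigma_{\exp B}$ because the two spectra share the same topological boundary and hence the same spectral radius. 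The outcome is that $\varphi$ is complex-linear and multiplicative with $\varphi(1)=1$, that is, a complex homomorphism.

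The main obstacle is precisely this third step: one must verify that every analytic ingredient of \cite{ks} remains valid with $\sigma_{\exp B}$ in place of $\sigma$. Since the two spectra differ only inside bounded components of the resolvent set, and since the tools used in \cite{ks} depend only on boundary (spectral-radius) information, the transfer is routine rather than conceptually new; the real content of the lemma is that real-linearity is enough structure to bring the Kowalski--S\l odkowski machinery to bear.
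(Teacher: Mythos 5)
Your opening moves match the paper: normalizing $\varphi(\lambda)=\lambda$ via $\sigma_{\exp B}(\lambda)=\{\lambda\}$ and recasting the hypothesis in Kowalski--S\l odkowski form are fine. The gap is in the assertion that the passage from $\sigma$ to $\sigma_{\exp B}$ is ``routine'' because the tools of \cite{ks} ``depend only on boundary (spectral-radius) information.'' That is precisely the point the paper flags as failing: the Kowalski--S\l odkowski argument decomposes $\varphi$ into the two complex-linear functionals $\varphi_1(x)=\mathrm{Re}\,\varphi(x)-i\,\mathrm{Re}\,\varphi(ix)$ and $\varphi_2(x)=\mathrm{Im}\,\varphi(ix)+i\,\mathrm{Im}\,\varphi(x)$, shows each is a character, and then must prove $\varphi_1=\varphi_2$; in \cite{ks} this last step uses the spectral mapping theorem, which is not a spectral-radius statement and does not hold for the exponential spectrum (holomorphic functions do not interact predictably with the filled-in holes of $\sigma_{\exp B}$). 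Your outline offers no substitute for it. The paper's replacement is a bespoke one: if $\varphi_1(a)=1$ and $\varphi_2(a)=0$, put $h(z)=e^{\pi i z/2}-1$; continuity gives $\varphi(h(a))=-1$, so $-1\in\sigma_{\exp B}(h(a))$, while $h(a)+1=\exp(\pi i a/2)$ lies in $\exp B$ by construction, so $-1\notin\sigma_{\exp B}(h(a))$ --- a contradiction. The whole point of choosing $h$ of this form is that one needs no mapping theorem at all.

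A second, smaller inaccuracy: spectral-radius domination alone cannot deliver multiplicativity. On $C[0,1]$ the functional $f\mapsto\int_0^1 f$ satisfies $|\varphi(f)|\le r(f)$ and $\varphi(1)=1$ without being a character, so reducing every input of \cite{ks} to ``same spectral radius'' cannot carry the Gleason--Kahane--\.Zelazko step. What a selection from $\sigma_{\exp B}$ actually provides is non-vanishing on $\exp B$ only (an invertible element outside the principal component may be sent to $0$), and one must therefore invoke the version of Gleason--Kahane--\.Zelazko whose hypothesis is non-vanishing on exponentials --- this is why the paper appeals to the \emph{original} proofs in \cite{kz,z}. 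As written, your argument has a genuine gap exactly where the two spectra differ.
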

\begin{proof}
A proof is similar to that for \cite[Lemma 2.1]{ks} in which the 
spectral maping theorem is applied. It is not suitable for the 
exponential spectrum, we apply an alternative. 
For $x\in B$, let
\[
\varphi_1(x)=\mathrm{Re}\varphi (x)-i\mathrm{Re}\varphi (ix)
\]
and
\[
\varphi_2(x)=\mathrm{Im}\varphi (ix)+i\mathrm{Im}\varphi (x).
\]
As in the same way as in the proof of \cite[Lemma 2.1]{ks}
$\varphi_1$ and $\varphi_2$ are complex linear selections from 
the exponential spectrum, hence they are complex homomorphisms 
by the
original proof of the Gleason-Kahane-\. Zelazko theorem \cite{kz,z} 
(cf. \cite{j4}).
We will show that $\varphi_1=\varphi_2$, which will 
force that $\varphi$ is a complex homomorphism. 
Soppose not. Then there is $a\in A$ with $\varphi_1(a)=1$ and 
$\varphi_2(a)=0$. Let 
\[
h(z)=\exp^{\frac{\pi i z}{2}}-1.
\]
Since $\varphi_1$ and $\varphi_2$ are continuous, we see that
\begin{equation*}\begin{split}
\varphi (h(a))&=\mathrm{Re}\varphi_1(h(a))+i\mathrm{Im}\varphi_2(h(a)) \\
&=\mathrm{Re}h(\varphi_1(a))+i\mathrm{Im}h(\varphi_2(a))=-1.
\end{split}\end{equation*}
Since $\varphi$ is a selection from the exponential spectrum, 
$-1\in \sigma_{\exp B}(h(a))$. On the other hand $h(a)+1\in \exp B$, so 
that $-1\not\in \sigma_{\exp B}(h(a))$, which is a contradiction proving that 
$\varphi_1=\varphi_2$.
\end{proof}

\section{Main results}
\begin{theorem}\label{submain}
Let $A$ be a unital semisimple commutative Banach algebra,
$B$ a unital Banach algebra, and 
\[
\Omega_A=\{f\in A: \text{$\|f-r\|<r$ for some positive real number $r$}\},
\]
\[
\Omega_B=\{a\in B: \text{$\|a-r\|<r$ for some positive real number $r$}\}.
\]
Suppose that $U$ is an open set such that $\Omega_A\subset U\subset A^{-1}$ 
and $({\mathbb C}\setminus \{0\})U\subset U$, 
and  that $V$ is an open set such that $\Omega_B\subset V\subset B^{-1}$, 
$({\mathbb C}\setminus \{0\})V\subset V$, and $V\Omega_B\subset V$.
Let $g\in B^{-1}$. If $T$ is an surjective isometry from $U$ onto 
$gV$, then $T$ is extended to a real linear isometry from $A$ onto $B$.
\end{theorem}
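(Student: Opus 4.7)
My plan is to apply the local Mazur--Ulam lemma (Lemma \ref{lmu}) to obtain $\mathbb{R}$-affineness of $T$ on the convex open neighborhood $\Omega_A$ of $1$, differentiate at $1$ to produce a real linear isometry $L:A\to B$, and then propagate the identity $T(\,\cdot\,)=T(1)+L(\,\cdot\,-1)$ from $\Omega_A$ to all of $U$, using the cone hypothesis $(\mathbb{C}\setminus\{0\})U\subset U$ and the asymmetric condition $V\Omega_B\subset V$.

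I will first verify that $\Omega_A$ is open and convex with $1$ in its interior: if $\|f_i-r_i\|<r_i$ for $i=1,2$ and $0\le t\le 1$, the triangle inequality gives
\[
\|(1-t)f_1+tf_2-((1-t)r_1+tr_2)\|<(1-t)r_1+tr_2,
\]
so the segment stays in $\Omega_A$, and the open unit ball around $1$ is contained in $\Omega_A$. Since $\Omega_A\subset U$ is convex and open and $T$ takes values in the open set $gV$, Lemma \ref{lmu} gives midpoint-preservation of $T$ on $\Omega_A$; continuity of the isometry upgrades this to $\mathbb{R}$-affineness of $T|_{\Omega_A}$. Because $\Omega_A-1$ is absorbing in $A$, for each $f\in A$ the formula
\[
L(f):=n\bigl(T(1+f/n)-T(1)\bigr)
\]
makes sense for any sufficiently large $n$. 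Affineness of $T$ on $\Omega_A$ forces $L$ to be independent of the chosen $n$, additive, and $\mathbb{R}$-homogeneous, hence $\mathbb{R}$-linear on $A$; the isometry identity $\|L(f)\|=n\|T(1+f/n)-T(1)\|=\|f\|$ is immediate. Thus $\tilde T(f):=T(1)+L(f-1)$ is an $\mathbb{R}$-affine isometry $A\to B$ that agrees with $T$ on $\Omega_A$.

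The main technical step is to extend the identity $\tilde T=T$ from $\Omega_A$ to all of $U$. On the source side, for each $\lambda\in\mathbb{C}\setminus\{0\}$ the translate $\lambda\Omega_A\subset U$ is convex and open, so Lemma \ref{lmu} gives $\mathbb{R}$-affineness of $T$ on it with some local linear part. On the target side, $V\Omega_B\subset V$ implies that $a\Omega_B$ is a convex open neighborhood of $a$ inside $gV$ for every $a\in gV$; applying Lemma \ref{lmu} to the inverse isometry $T^{-1}$ on each $a\Omega_B$ makes $T^{-1}$ affine there, so each preimage $T^{-1}(a\Omega_B)$ is an additional convex open subset of $U$ on which $T$ is affine. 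These pieces cover $U$ and their parameters vary over connected sets ($\mathbb{C}\setminus\{0\}$ on the source, $gV$ on the target), so a standard open--closed argument identifies every local linear part with the single derivative $L$ produced above, yielding $T=\tilde T$ on $U$.

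The principal obstacle is exactly this propagation: $U$ need not be convex or connected, and $\bigcup_{\lambda\in\mathbb{C}\setminus\{0\}}\lambda\Omega_A$ can be a proper subset of $U$, so source-side convex pieces alone do not suffice to chain the local affine data across $U$. The role of the asymmetric hypothesis $V\Omega_B\subset V$ is to supply, via $T^{-1}$, the additional overlapping convex pieces in $U$ needed to complete the gluing. Once $T=\tilde T$ on $U$ has been established, surjectivity of $L:A\to B$ follows by running the same differentiation construction with $T^{-1}$ around $T(1)\in T(1)\Omega_B\subset gV$, producing an $\mathbb{R}$-linear isometric two-sided inverse for $L$.
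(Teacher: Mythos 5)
Your overall strategy (local Mazur--Ulam on $\Omega_A$, differentiate at $1$, propagate) is a reasonable starting point, but there are two genuine gaps, and they sit exactly where the hypothesis that $A$ is semisimple must enter --- your argument never uses semisimplicity at all, which is already a warning sign in view of Example \ref{dame}, where $T(M)=M+F$ is an affine isometry between the invertible groups that does \emph{not} restrict from any linear map. First, the map $\tilde T(f)=T(1)+L(f-1)$ you produce is only $\mathbb{R}$-affine, while the theorem demands a \emph{linear} extension; you must show the constant term vanishes, i.e.\ $T(1)=L(1)$, equivalently $\lim_{t\to 0^+}T(t\cdot 1)=0$. This is not automatic. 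The paper proves it by showing that $u=\lim_{gV\ni a\to 0}T^{-1}(a)$ satisfies $\sigma(u)=\{0\}$ (using the cone hypothesis $(\mathbb{C}\setminus\{0\})U\subset U$ together with Lemma \ref{lmu} applied to $T^{-1}$ along the segments joining $(1-r)c_\lambda$ and $rc_\lambda$), and then invoking semisimplicity and commutativity to conclude $u=0$. Without this step your $\tilde T$ can perfectly well have a nonzero constant term, exactly as in Example \ref{dame}.

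Second, the propagation of $T=\tilde T$ from $\Omega_A$ to all of $U$ is asserted rather than carried out, and the ``standard open--closed argument'' you invoke does not obviously close: $U$ and $gV$ may be badly disconnected (for $A=C(\mathbb{T})$ the group $A^{-1}$ has infinitely many components), and there is no reason the convex pieces $T^{-1}(a\Omega_B)$ should overlap the region $(\mathbb{C}\setminus\{0\})\Omega_A$ on which $\tilde T$ has already been identified, so the local affine parts living on distant components cannot be chained back to $L$. The paper sidesteps connectivity entirely: it first proves $T(rf)=rT(f)$ for all real $r\ne 0$ and all $f\in U$ (using the cone property and the limit above), defines $T_U(f)=T(f+2\|f\|)-T(2\|f\|)$ globally, and then shows $T_U=T$ on $U$ \emph{pointwise} by a spectral argument: with $P=T_U^{-1}\circ T$, the isometry applied to $f\pm 2\|f\|\in\Omega_A$ traps $\sigma(P(f)-f)$ in the intersection of the two closed disks of radius $2\|f\|$ centered at $\pm 2\|f\|$, which is $\{0\}$, whence $P(f)=f$ by semisimplicity. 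You would need a substitute for both of these steps to make your route go through.
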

\begin{proof}
Applying Lemma \ref{lmu} we see that the equality 
\[
T\left(\frac{f+g}{2}\right)=\frac{T(f)+T(g)}{2}
\]
holds for every pair $f$ and $g$ in $\Omega_A$ since 
$\Omega_A$ is convex.

We will show that 
$
\lim_{U\ni f\to 0}T(f)=0.
$
Since $T^{-1}$ is an isometry $\lim_{gV\ni a\to 0}T^{-1}(a)$ exists 
by a routine argument on Cauchy sequences.
Let $u=\lim_{gV\ni a\to 0}T^{-1}(a)$. 
We see that $\sigma (u)=\{0\}$. (Suppose not; 
$0\ne \lambda \in \sigma (u)$. Then $-\lambda \in U$ since 
$|\lambda|\in \Omega_A$ and $({\mathbb C}\setminus\{0\})U\subset U$. 
Let $T(-\lambda)=c_{\lambda}\in gV$. The inequality $(1-s)(1-r)+sr>0$ 
holds for every $0<r<1$ 
and $0\le s\le 1$, hence
\[
(1-s)\{(1-r)c_{\lambda}\}+src_{\lambda}\in gV.
\]
Applying Lemma \ref{lmu} with $f=(1-r)c_{\lambda}$, $g=rc_{\lambda}$ 
we have
\[
T^{-1}\left(\frac{c_{\lambda}}{2}\right)=
T^{-1}\left(\frac{(1-r)c_{\lambda}+rc_{\lambda}}{2}\right)=
\frac{T^{-1}((1-r)c_{\lambda})+T^{-1}(rc_{\lambda})}{2}.
\]
Letting $r\to 0$ we have
\[
T^{-1}\left(\frac{c_{\lambda}}{2}\right)=\frac{-\lambda +u}{2},
\]
which is a contradiction since 
$T^{-1}\left(\frac{c_{\lambda}}{2}\right) \in U\subset A^{-1}$ 
and 
$\frac{-\lambda + u}{2}\not\in A^{-1}$ for $\lambda \in \sigma (u)$.) 
Since $A$ is semisimple and commutative, we see that $u=0$;
$\lim_{gV\ni a\to 0}T^{-1}(a)=0$. It turns out that 
\begin{equation}\label{limTf}
\lim_{U\ni f\to 0}T(f)=0
\end{equation}
since $T$ is isometry.

Next we will show that $T(-f)=-T(f)$ for every $f\in U$. 
Let $f\in U$. Then $-f\in U$, and for every integer $n$, 
$-f+\frac{i}{n}f\in U$. 
We also see  
\[
(1-r)f+r(-f+\frac{i}{n}f) \in U
\]
for every $0\le r \le 1$ and every  integer $n$. 
Then by Lemma \ref{lmu} 
\[
T\left( \frac{i}{2n}f\right)=T\left(\frac{f+(-f+\frac{i}{n}f)}{2}\right)
=\frac{T(f)+T(-f+\frac{i}{n}f)}{2}
\]
hold. Letting $n\to \infty$ we have $T(-f)=-T(f)$ by (\ref{limTf}).

Next we will show that 
\begin{equation}\label{12}
T\left(\frac{f}{2}\right)=\frac{T(f)}{2}
\end{equation}
holds 
for every $f\in U$. 
Let $f\in U$. 
Then for every $1>\varepsilon >0$ and every $0\le r \le 1$
\[
(1-r)f +r\varepsilon f \in U.
\]
Hence 
$T\left(\frac{f+\varepsilon f}{2}\right)=\frac{T(f)+T(\varepsilon f)}{2}$ 
holds by Lemma \ref{lmu}, then letting $\varepsilon \to 0$ 
the equation (\ref{12}) holds.

Let $f\in U$. 
Suppose that $T(kf)=kT(f)$ holds for a positive integer $k$. 
Then 
\[
T\left( \frac{f+kf}{2}\right) =\frac{T(f)+T(kf)}{2}=\frac{(k+1)T(f)}{2}
\]
and by 
(\ref{12}) 
\[
T\left(\frac{f+kf}{2}\right)=\frac{T((k+1)f)}{2}
\]
hence by induction 
$T(nf)=nT(f)$ holds for every positive integer $n$. 
Then for any pair of positive integers $m$ and $n$, 
\[
mT(\frac{n}{m}f)=T(m\frac{n}{m}f)=T(nf)=nT(f)
\]
holds, hence $T(\frac{n}{m}f)=\frac{n}{m}T(f)$ holds. 
By continuity of $T$, 
$T(rf)=rT(f)$ 
holds for every $f\in U$ and $r>0$. 
Henceforce  
\begin{equation}\label{3.15}
T(rf)=rT(f)
\end{equation}
holds for every $f\in U$ and for a non-zero real number $r$ 
since $T(-f)=-T(f)$.

Applying Lemma \ref{lmu} and (\ref{12}) we see that
\begin{equation}\label{sum}
T(f+g)=T(f)+T(g)
\end{equation}
holds for every pair $f$ and $g$ in $U$ whenever 
$(1-r)f+rg\in U$ holds for every $0\le r \le 1$. In particular 
(\ref{sum}) holds if $f,g\in \Omega_A$. 

Define the map $T_U:A\to B$ by $T_U(0)=0$ and 
\[
T_U(f)=T(f+2\|f\|)-T(2\|f\|)
\]
for a non-zero $f\in A$. The map $T_U$ is well-defined since 
$f+2\|f\|$ and $2\|f\|$ are in $\Omega_A$ for every non-zero $f\in A$ and 
$T$ is defined on $U\supset \Omega_A$. 
If, in particular, $f\in \Omega_A$, then 
$T(f+2\|f\|)=T(f)+T(2\|f\|)$ holds, so that $T_U(f)=T(f)$ holds. 

We will show that $T_U$ is real-linear. Let $f\in A\setminus \{0\}$. Then 
$f+r\in \Omega_A$ for every $r\ge 2\|f\|$, whence by (\ref{sum})
\[
T(f+2\|f\|)+T(r)=T(f+2\|f\|+r)=T(f+r)+T(2\|f\|),
\]
so that
\begin{equation}\label{UOmega}
T_U(f)=T(f+r)-T(r)
\end{equation}
holds for every $r\ge 2\|f\|$. Let $f,g \in A$. 
Then $T_U(f+g)=T_U(f)+T_U(g)$ holds if $f=0$ or $g=0$. Suppose that 
$f\ne 0$ and $g\ne 0$. Then by (\ref{sum}) and (\ref{UOmega}) we have
\begin{equation*}\begin{split}
T_U(f+g)&=
 T(f+g+2\|f\|+2\|g\|)-T(2\|f\|+2\|g\|) \\
&= T(f+2\|f\|)+T(g+2\|g\|)-T(2\|f\|)-T(2\|g\|) \\
&= T_U(f)+T_U(g)
\end{split}
\end{equation*}
holds. If $f=0$ or $r=0$ then $T_U(rf)=rT_U(f)$. Suppose that 
$f\ne 0$ and $r\ne 0$. If $r>0$, then by (\ref{3.15})
\begin{equation*}\begin{split}
T_U(rf) &=
T(rf+2\|rf\|)-T(2\|rf\|) \\
&= T(r(f+2\|f\|))-T(r2\|f\|) \\
&=rT(f+2\|f\|)-rT(2\|f\|)=rT_U(f)
\end{split}
\end{equation*}
If $r<0$, then
\[
T_U(rf)=(-r)\left(T(-f+2\|f\|)-T(2\|f\|)\right).
\]
Since $-f+2\|f\|$, $f+2\|f\|\in \Omega_A$ we have
\[
T(-f+2\|f\|)-T(2\|f\|)=-T(f+2\|f\|)+T(2\|f\|).
\]
It follows that 
\[
T_U(rf)=(-r)\left(-T(f+2\|f\|)+T(2\|f\|)\right)=rT_U(f).
\]

We will show that $T_U$ is surjective.
Let $a\in B$.  
Then 
\[
(T(1))^{-1}a+r\in \Omega_B\subset V,
\]
so 
\[
a+T(r)=a+rT(1)\in T(1)\Omega_B \subset gV\Omega_B\subset gV
\]
holds whenever $\|(T(1))^{-1}a\|<r$ and $\|a\|<r$ 
for $T(1)\in gV$. We also have 
\[
\|T^{-1}(a+T(r))-r\|=\|a+T(r)-T(r)\|<r,
\]
thus $T^{-1}(a+T(r))\in \Omega_A$. Let $f= T^{-1}(a+T(r))-r\in A$. 
Then 
$f+r=T^{-1}(a+T(r))\in \Omega_A$. Hence by (\ref{sum}) we see that 
\[
T(f+r)+T(2\|f\|)=T(f+2\|f\|+r)=T(f+2\|f\|)+T(r),
\]
so we have 
\[
a=T(f+r)-T(r)=T(f+2\|f\|)-T(2\|f\|)=T_U(f).
\]

We will show that $T_U$ is an isometry. Since $T_U$ is linear, 
it is sufficient to show that 
$\|T_U(f)\|=\|f\|$ for every 
$f\in A$. If $f=0$, the equation clearly holds. Suppose 
that$f\ne 0$. Then 
\[
\|T_U(f)\|=\|T(f+2\|f\|)-T(2\|f\|)\|=\|f+2\|f\|-2\|f\|\|=\|f\|
\]
hold.

We will show that $T_U$ is an extension of $T$, i.e., $T_U(f)=T(f)$ for 
every $f\in U$. Put 
$P=T_U^{-1}\circ T:U\to A$. Let $f\in U$. Then 
$P(f+2\|f\|)=f+2\|f\|$ holds since $f+2\|f\|\in \Omega_A$ and 
$T=T_U$ on $\Omega_A$. Thus we have
\begin{equation*}\begin{split}
2\|f\|&=\|T(f+2\|f\|)-T(f)\|\\
&=\|f+2\|f\|-P(f)\|\ge \|P(f)-f-2\|f\|\|_{\infty},
\end{split}
\end{equation*}
so that the range of $P(f)-f$ on the maximal ideal space $\Phi_A$ is contained 
in the closed disk in the complex plane 
with the radius $2\|f\|$ and the center $2\|f\|$. 
Applying $P(-f+2\|f\|)=-f+2\|f\|$ in the same way
\[
2\|f\|\ge \|P(f)-f+2\|f\|\|_{\infty}
\]
holds since $T(-f)=-T(f)$, 
so that the range of $P(f)-f$ is in the closed unit disk with the 
radius $2\|f\|$ and the center $-2\|f\|$. It follows that 
$\sigma (P(f)-f)=(P(f)-f)(\Phi_A)=\{0\}$. 
Since $A$ is semisimple and commutative, we see that 
$P(f)=f$ for every $f\in U$; $T_U(f)=T(f)$ holds for every $f\in U$.
\end{proof}

Two unital semisimple commutative Banach algebras which are 
isometrically isomorphic to each other 
as Banach spaces need not be isometrically isomorphic to each other
as Banach algebras.
\begin{example}\label{ww+}
Let $W$ be the Wiener algebra; 
\[
W=\{f\in C({\mathbb T}): \|f\|=\sum_{-\infty}^{\infty}|\hat f(n)|<\infty\},
\]
where ${\mathbb T}$ is the unit circle in the complex plane and 
$\hat f(n)$ denotes the $n$-th Fouriere coeficient and 
\[
W_+=\{f\in W: \text{$\hat f(n)=0$ for every $n < 0$}\}.
\]
Then 
\[
(T_W(f))(e^{i\theta})=\sum_{n=0}^{\infty}\hat f(n)e^{2ni\theta} +
\sum_{n=1}^{\infty}\hat f(-n)e^{(2n-1)i\theta}
\]
defines an isometric isomorphism as Banach spaces from 
$W$ onto $W_+$. On the other hand, $W$ is not isomorphic as 
a complex algebra to $W_+$ since the maximal ideal  
space of $W$ is ${\mathbb T}$ 
and that of $W_+$ is the closed unit disk, which is not homeomorphic to 
${\mathbb T}$.
\end{example}
Despite above isometries between two groups of invertible 
elements in unital semisimple commutative 
Banach algebras induce isometrical group isomorphisms.
\begin{theorem}\label{main}
Let $A$ be a unital semisimple commutative Banach algebra and $B$ 
a unital Banach algebra. Suppose ${\mathfrak A}$ and ${\mathfrak B}$
are open subgroups of $A^{-1}$ and $B^{-1}$ respectively. 
Supposet that $T$ is a surjective isometry (as a map between 
metric spaces) 
from ${\mathfrak A}$ onto
${\mathfrak B}$. Then $B$ is a semisimple and commutative, and 
$(T(1))^{-1}T$ is extended to an isometrical real algebra 
isomorphism from $A$ onto $B$. In particular, 
$A^{-1}$ is isometrically isomorphic to 
$B^{-1}$ as a metrizable group.
\end{theorem}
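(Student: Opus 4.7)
My plan is to apply Theorem~\ref{submain} to extend $T$ to a real linear isometry $\tilde T\colon A\to B$, then to promote the normalized map $\Psi:=T(1)^{-1}\tilde T$ to a real algebra isomorphism via Lemma~\ref{ks}, and finally to deduce that $\Psi$ is isometric once multiplicativity is in hand.

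First I would verify the hypotheses of Theorem~\ref{submain} with $U:=\mathfrak{A}$, $V:=\mathfrak{B}$, and $g:=T(1)$. Any open subgroup of a topological group is clopen and hence contains the identity component; since $\|f-r\|<r$ implies $\|f/r-1\|<1$ (so $f/r,r\in\exp A$), we have $\Omega_A\subset\exp A\subset\mathfrak{A}$, and likewise $\Omega_B\subset\mathfrak{B}$. By connectedness of $\mathbb{C}\setminus\{0\}$, $\mathbb{C}\setminus\{0\}\subset\exp A\subset\mathfrak{A}$, which gives $(\mathbb{C}\setminus\{0\})\mathfrak{A}\subset\mathfrak{A}$ and similarly for $\mathfrak{B}$; the condition $\mathfrak{B}\Omega_B\subset\mathfrak{B}$ is immediate from the group law. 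Since $T(\mathfrak{A})=\mathfrak{B}=g\mathfrak{B}$, Theorem~\ref{submain} yields a real linear isometry $\tilde T\colon A\to B$ extending $T$, with $\tilde T(1_A)=g$; set $\Psi:=g^{-1}\tilde T$, a real linear bijection with $\Psi(1_A)=1_B$.

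Since $T$ is a homeomorphism (being an isometry) of topological groups, it maps the identity component $\exp A$ of $\mathfrak{A}$ onto the component of $\mathfrak{B}$ containing $g$, namely $g\exp B$; consequently $\Psi(\exp A)=\exp B$. Set $i_A:=\Psi^{-1}(i\cdot 1_B)\in A$; the pivotal auxiliary identity is $i_A^2=-1_A$, which I would establish by examining the restriction of $\Psi$ to the subgroup $\mathbb{C}\setminus\{0\}\subset\mathfrak{A}$ and using that $\Psi(-1_A)=-1_B$. Given this, $\phi(i_A)\in\{i,-i\}$ for every $\phi\in\Phi_A$. Define $\rho_\phi\colon B\to\mathbb{C}$ by $\rho_\phi:=\phi\circ\Psi^{-1}$ when $\phi(i_A)=i$ and $\rho_\phi:=\overline{\phi\circ\Psi^{-1}}$ when $\phi(i_A)=-i$; each is real linear. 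Using $\Psi^{-1}(\mu)=\mathrm{Re}(\mu)\cdot 1_A+\mathrm{Im}(\mu)\,i_A$ for scalars $\mu\in\mathbb{C}$, one checks directly that $\phi(\Psi^{-1}(b-\rho_\phi(b)))=0$ in either case, so $\Psi^{-1}(b-\rho_\phi(b))\in\ker\phi$, which is disjoint from $A^{-1}\supseteq\exp A$; combined with $\Psi(\exp A)=\exp B$, this forces $b-\rho_\phi(b)\notin\exp B$. Hence $\rho_\phi$ is a real linear selection from $\sigma_{\exp B}$, and Lemma~\ref{ks} delivers $\rho_\phi\in\Phi_B$.

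Since $\Phi_A$ separates points of $A$ by semisimplicity, $\{\rho_\phi\}_{\phi\in\Phi_A}$ separates points of $B$; this forces $B$ to be commutative (characters annihilate commutators) and semisimple. Multiplicativity of $\Psi$ then follows from $\rho_\phi(\Psi(fh))=\phi(fh)=\phi(f)\phi(h)=\rho_\phi(\Psi(f))\rho_\phi(\Psi(h))$ (or the conjugate identity in the other case) together with semisimplicity of $B$, so $\Psi$ is a real algebra isomorphism. For the isometric statement, set $h_0:=\tilde T^{-1}(1_B)$; then $\|h_0\|_A=1$ by isometry of $\tilde T^{-1}$, and since $\tilde T=g\Psi$, $\Psi(h_0)=g^{-1}$. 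By multiplicativity, $\tilde T^{-1}(g^{-1})=\Psi^{-1}(g^{-2})=h_0^2$, giving $\|g^{-1}\|_B=\|h_0^2\|_A\leq\|h_0\|_A^2=1$; together with the automatic $\|g^{-1}\|_B\geq 1$, we get $\|g^{-1}\|_B=\|g\|_B=1$. Submultiplicativity then forces $\|g^{-1}b\|_B=\|b\|_B$ for every $b\in B$, whence $\|\Psi(f)\|_B=\|g^{-1}\tilde T(f)\|_B=\|\tilde T(f)\|_B=\|f\|_A$. The main obstacle is the second paragraph, in particular the identity $i_A^2=-1_A$: because $\Psi$ is only real linear a priori, this square-root-of-$-1$ property is entangled with the very multiplicativity we are trying to prove, and disentangling it requires careful bookkeeping of how $T$ acts on the subgroup $\mathbb{C}\setminus\{0\}\subset\mathfrak{A}$.
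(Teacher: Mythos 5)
Your overall architecture matches the paper's: extend $T$ to a real-linear isometry via Theorem~\ref{submain}, manufacture real-linear selections from $\sigma_{\exp B}$ out of characters of $A$ (conjugated at those characters where the candidate square root of $-1$ takes the value $-i$), invoke Lemma~\ref{ks}, and deduce commutativity, semisimplicity and multiplicativity from the resulting separating family of characters of $B$. Your per-character conjugation even avoids the paper's detour through the auxiliary algebra $\iota(A)$ with the norm $\|\cdot\|_{\pm}$, and your endgame for the isometry of $T(1)^{-1}\tilde T$ (showing $\|g\|=\|g^{-1}\|=1$ and using submultiplicativity) is shorter than the paper's, which instead proves two opposite norm inequalities for $S$ and its inverse and then identifies the inverse of $S$ with $(T(1))^{-1}T_U$. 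Those parts are sound.

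The genuine gap is exactly the step you flag and defer: $i_A^2=-1_A$, equivalently $\phi(i_A)\in\{i,-i\}$ for every $\phi\in\Phi_A$. Your proposed route --- examining the restriction of $\Psi$ to the subgroup $\mathbb{C}\setminus\{0\}$ and using $\Psi(-1)=-1$ --- cannot work as stated: on $\mathbb{C}\cdot 1_A$ the map $\Psi$ is merely an arbitrary real-linear injection fixing $1$, and the group structure of $\mathbb{C}\setminus\{0\}$ imposes no constraint on $\Psi^{-1}(i)$ without the multiplicativity you are trying to prove. What closes this step in the paper is a metric argument, and it is the heart of the proof: (i) $|\tilde T^{-1}(1)|=1$ and $|\tilde T^{-1}(i)|=1$ pointwise on $\Phi_A$, proved by observing that if, say, $|\tilde T^{-1}(1)(x)|<1$, then $\tilde T^{-1}(1)-(\tilde T^{-1}(1))(x)$ lies within distance $<1$ of $\tilde T^{-1}(1)$, so the isometry sends it into the open unit ball about $1$, which lies in $\exp B\subset\mathfrak{B}$; pulling back yields an element of $\mathfrak{A}\subset A^{-1}$ vanishing at $x$, a contradiction; and (ii) the estimate $|r(\tilde T^{-1}(1))(x)\pm(\tilde T^{-1}(i))(x)|=|(\tilde T^{-1}(r\pm i))(x)|\le\|r\pm i\|=\sqrt{r^2+1}$ for all $r>0$, which as $r\to\infty$ forces $(\tilde T^{-1}(1))^{-1}\tilde T^{-1}(i)$ to be purely imaginary, hence equal to $\pm i$, at each point of $\Phi_A$. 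Note moreover that the paper's candidate square root of $-1$ is $\bigl(\tilde T^{-1}(1)\bigr)^{-1}\tilde T^{-1}(i)$, normalized on the $A$ side, whereas your $i_A=\Psi^{-1}(i)=\tilde T^{-1}(T(1)i)$ is normalized on the $B$ side; the two coincide only after multiplicativity is known, and the unit-ball arguments in (i) and (ii) exploit balls of radius $1$ centered at $1$ and at $i$ in $B$, which your normalization does not provide (the relevant ball about $T(1)i$ has radius only $1/\|T(1)^{-1}\|$). Without supplying (i) and (ii), or an equivalent, the proof is incomplete at its pivotal point.
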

\begin{proof}
Since ${\mathfrak A}$ (resp. ${\mathfrak B}$) is an open subgroup of 
$A^{-1}$ (resp. $B^{-1}$), $\exp A\subset {\mathfrak A}$ 
(resp. $\exp B \subset {\mathfrak B}$), 
whence $\Omega_A\subset {\mathfrak A}$ 
(resp. $\Omega_B\subset {\mathfrak B}$). 
Applying Theorem \ref{submain} with $U={\mathfrak A}$, 
$V={\mathfrak B}$, and $g=1$, we obtain 
a surjective real linear isometry $T_U$ from $A$ onto $B$ which 
is the extension of $T$. 

We will show that $|T_U^{-1}(1)|=1$ on $\Phi_A$. Since $T_U^{-1}$ is a linear 
isometry, $\|T_U^{-1}(1)\|=1$ holds, hence $|T_U^{-1}(1)|\le 1$ on 
$\Phi_A$. Suppose that there exists 
$x\in \Phi_A$ such that $|T_U^{-1}(1)(x)|<1$. Since $T_U$ is an isometry
\begin{equation*}\begin{split}
1>|T_U^{-1}(1)(x)|
&=\|T_U^{-1}(1)-\left(T_U^{-1}(1)-(T_U^{-1}(1))(x)\right)\| \\
&=\|1-T_U\left(T_U^{-1}(1)-(T_U^{-1}(1))(x)\right)\|,
\end{split}
\end{equation*}
so that $T_U\left(T_U^{-1}(1)-(T_U^{-1}(1))(x)\right) \in \exp B \subset 
{\mathfrak B}$. Since $T_U=T$ on ${\mathfrak B}$ we have that 
\[
T_U^{-1}(1)-(T_U^{-1}(1))(x)\in {\mathfrak A}\subset A^{-1},
\]
which is a contradiction since 
$\left(T_U^{-1}(1)-(T_U^{-1}(1))(x)\right)(x)=0$. Henceforce  
\begin{equation}\label{TU}
\text{$|T_U^{-1}(1)|=1$ on $\Phi_A$}.
\end{equation}
holds.

In a way similar to the above we have
\begin{equation}\label{TUi}
\text{$|T_U^{-1}(i)|=1$ on $\Phi_A$}.
\end{equation}

Define $S:B\to A$ by $S(a)=\left(T^{-1}(1)\right)^{-1}T_U^{-1}(a)$ 
for $a \in B$. 
Then $S$ is a bounded real linear bijection from $B$ onto $A$ such that 
$S({\mathfrak B})={\mathfrak A}$. Let $a\in B$. Then 
\begin{equation}\label{sa}
\|S(a)\|=\|T^{-1}(1)\|\|\left(T^{-1}(1)\right)^{-1}T_U^{-1}(a)\|\ge 
\|T_U^{-1}(a)\|=\|a\|.
\end{equation}

Next we will show that $\left(S(i)\right)(\Phi_A)\subset i{\mathbb R}$. Let 
$x\in \Phi_A$. For every $r>0$ we see that
\begin{equation*}\begin{split}
|r\pm (S(i))(x)|&=|r(T^{-1}(1))(x)\pm (T_U^{-1}(i))(x)| \\
&=
|(T_U^{-1}(r\pm i))(x)|\le \|T_U^{-1}(r\pm i)\|=|r\pm i|
\end{split}
\end{equation*}
since $T_U^{-1}$ is real-linear and (\ref{TU}) holds, hence 
we have that $\left(S(i)\right)(\Phi_A)\subset i{\mathbb R}$, so that
\[
(S(i))(\Phi_A)\subset \{i,-i\}
\]
holds by (\ref{TU}) and (\ref{TUi}).

Let 
\[
\Phi_{A+}=\{x\in \Phi_A:S(i)(x)=i\},
\]
\[
\Phi_{A-}=\{x\in \Phi_A:S(i)(x)=-i\}.
\]
Then $\Phi_{A+}$ and $\Phi_{A-}$ are (possibly empty) closed and 
open subsets of 
$\Phi_A$ respectively and 
\[
\Phi_A=\Phi_{A+}\cup \Phi_{A-},\quad \Phi_{A+}\cap \Phi_{A-}=\emptyset.
\]
Define a function $\iota :C(\Phi_A)\to C(\Phi_A)$ by
\begin{equation*}
(\iota (f))(x)=
\begin{cases}
f(x), x\in \Phi_{A+} \\
\overline{f(x)}, x\in \Phi_{A-}
\end{cases}
\end{equation*}
Then $\iota$ is a real-linear bijection. Note that 
$\iota (S(i))=i$ and $\iota (A)$ is a complex algebra. 
Define the norm $\|\cdot \|_{\pm}$ 
on $\iota (A)$ by 
\[
\|\iota (f)\|_{\pm}=\max \{\|f\|_+,\|f\|_-\},
\]
where 
\[
\|f\|_+=\inf\{\|g\|:g\in A, \text{$g=f$ on $\Phi_{A+}$}\},
\]
\[
\|f\|_-=\inf\{\|h\|:h\in A, \text{$h=f$ on $\Phi_{A-}$}\}.
\]
Applying the \v Silov idempotent theorem and by a routine argument 
we see that $\iota (A)$ is a unital semisimple 
commutative Banach algebra with respect to the norm $\|\cdot \|_{\pm}$. 
Define $\tilde S:B\to \iota (A)$ by $\tilde S(a)=\iota (S(a))$ for 
$a\in B$. 
Then $\tilde S$ is a bounded real linear bijection from $B$ onto $A$ such that 
$\tilde S(1)=1$ and $\tilde S(i)=i$. 

Let $\phi \in \Phi_{\iota (A)}$. 
We will show that $\phi\circ \tilde S$ is a real linear selection from the 
exponential spectrum $\sigma_{\exp B}$, where 
\[
\sigma_{\exp B}(a)=\{\lambda \in {\mathbb C}: a-\lambda \not\in \exp B\}
\]
for $a\in B$. We only need to show that 
$\phi \circ \tilde S(a)\in \sigma_{\exp B}(a)$ for every $a\in B$. Let 
$a\in B$ and put $\lambda = \phi \circ \tilde S(a)$. Then 
$\tilde S(a)-\lambda \not\in (\iota (A))^{-1}$ since 
$\phi \in \Phi_{\iota(A)}$. Suppose that 
$\lambda \not\in \sigma_{\exp B}(a)$. Then 
\[
\tilde S(a-\lambda)\in \tilde S(\exp B)\subset \iota({\mathfrak A})
\subset \iota(A^{-1}).
\]
Note that $\iota(A^{-1})=(\iota(A))^{-1}$ holds. 
Since $\tilde S(1)=1$, $\tilde S(i)=i$, and $\tilde S$ is real-linear, 
\[
\tilde S(a-\lambda)=\tilde S(a)-\lambda,
\]
so
\[
\tilde S(a)-\lambda \in (\iota(A))^{-1}
\]
holds, which is a contradiction.

By Lemma \ref{ks} we see that $\phi\circ \tilde S$ is a complex homomorphism. 
It follows that $\tilde S$ is a (complex) algebra isomorphism from $B$ onto 
$\iota (A)$. In particular, we see that $B$ is semisimple and 
commutative. 

We see that $S=\left(T^{-1}(1)\right)^{-1}T_U^{-1}$ 
is a real algebra isomorphism 
from $B$ onto $A$. Since $B$ is semisimple and commutative, we see in a 
way similar to the above that
$\left(T(1)\right)^{-1}T_U$ 
is a real algebra isomorphism from $A$ onto $B$ such that 
\begin{equation}\label{TUT1}
\|\left(T(1)\right)^{-1}T_U(f)\|\ge \|f\|
\end{equation}
holds for every $f\in A$.

We will show that 
\begin{equation}\label{-}
\left(\left(T^{-1}(1)\right)^{-1}T_U^{-1}\right)^{-1}=(T(1))^{-1}T_U.
\end{equation}
Let $f\in A$ and put
\[
a=\left(\left(T^{-1}(1)\right)^{-1}T_U^{-1}\right)^{-1}(f).
\]
Then $a=T_U(T^{-1}(1)f)$ holds. On the other hand, since 
$\left(T^{-1}(1)\right)^{-1}T_U^{-1}$ is multiplicative, we see that
\begin{multline*}
T(1)T_U(T^{-1}(1)f)
= T_U\left(T^{-1}(1)(T^{-1}(1))^{-1}\right)
T_U(T^{-1}(1)f) \\
=\left(\left(T^{-1}(1)\right)^{-1}T_U^{-1}\right)^{-1}
\left(\left(T^{-1}(1)\right)^{-1}\right)
\left(\left(T^{-1}(1)\right)^{-1}T_U^{-1}\right)^{-1}(f)\\
=
\left(\left(T^{-1}(1)\right)^{-1}T_U^{-1}\right)^{-1}
\left(\left(T^{-1}(1)\right)^{-1}f\right) \\
=T_U\left(T^{-1}(1)(T^{-1}(1))^{-1}f\right)
=T_U(f).
\end{multline*}
Henceforce 
\[
\left(T(1)\right)^{-1}T_U(f)=T_U(T^{-1}(1)f)=
\left(\left(T^{-1}(1)\right)^{-1}T_U^{-1}\right)^{-1}(f)
\]
holds for every $f\in A$; (\ref{-}) holds.
Then by (\ref{sa}) and (\ref{TUT1}) we see that
\[
\|(T(1))^{-1}T_U(f)\|=\|f\|
\]
holds for every $f\in A$. We see that $(T(1))^{-1}T_U$ is an 
isometrica real algebra isomorphism from $A$ onto $B$, hence 
$(T(1))^{-1}T_U(A^{-1})=B^{-1}$, and we see that $A^{-1}$ is isometrically 
isomorphic to $B^{-1}$ as a metrizable group.
\end{proof}

We see by Theorem \ref{main} that the structure as a 
metrizable group of the group of the invertible elements in 
the unital semisimple commutative Banach algebra is 
restored from the metric structure of the group in the category of unital 
Banach algebras. 
\begin{pro}
In which unital Banach algebra is the structure as the metrizable 
group of the group of the invertible elements 
restored from the metric structure of the group 
in the category of unital Banach 
algebras?
\end{pro}
Theorem \ref{main} does not hold if $A$ is commutative but not semisimple  
as the following example shows.
\begin{example}\label{dame}
Let 
\[
A_0=\{
\left(
\begin{smallmatrix}
0&a&b \\0&0&c \\ 0&0&0
\end{smallmatrix}
\right)
:a,\,\,b,\,\,c\in {\mathbb C}\}.
\]
Let 
\[
A=\{
\left(
\begin{smallmatrix}
\alpha&a&b \\0&\alpha&c \\ 0&0&\alpha
\end{smallmatrix}
\right)
:\alpha,\,\,a,\,\,b,\,\,c\in {\mathbb C}\}
\]
be the unitization of $A_0$, where the multiplication (in $A_0$) is the 
zero multiplication; $MN=0$ for every $M,N\in A_0$.
Let $B=A$ as sets, while the multiplication in $B$ is the usual multiplication 
for matrixes. 
Then $A$ and $B$ are unital Banach algebras under the usual operator norm. 
Note that $A$ is commutative, but not semisimple. Note also that 
$A^{-1}=\{\left(
\begin{smallmatrix}
\alpha&a&b \\0&\alpha&c \\ 0&0&\alpha
\end{smallmatrix}
\right)\in A:\alpha\ne 0\}$ and $B^{-1}=\{\left(
\begin{smallmatrix}
\alpha&a&b \\0&\alpha&c \\ 0&0&\alpha
\end{smallmatrix}
\right)\in B:\alpha\ne 0\}$.
Put 
$F=\left(
\begin{smallmatrix}
0&0&7 \\0&0&0 \\ 0&0&0
\end{smallmatrix}
\right)$
Define 
$T:A^{-1}\to B^{-1}$ by $T(M)=M+F$. 
Then $T$ is well-defined and surjective (affine) isometry from 
$A$ onto $B$.
On the other hand $A^{-1}$ is not (group) isomorphic to $B^{-1}$
\end{example}

\end{document}